\DeclareMathAlphabet{\mathpzc}{OT1}{pzc}{m}{it}
\newcommand{\ncom}{\newcommand}
\ncom{\rar}{\rightarrow}
\ncom{\imply}{\Rightarrow}
\ncom{\lrar}{\longrightarrow}
\ncom{\into}{\hookrightarrow}
\ncom{\onto}{\twoheadrightarrow}
\ncom{\ov}{\overline}
\ncom{\m}{\mbox}
\ncom{\sta}{\stackrel}
\ncom{\invlim}{\varprojlim}
\ncom{\xhat}{\widehat}
\ncom{\vspc}{\vspace{3mm}}
\ncom{\End}{{\cE}nd}
\ncom{\tensor}{\otimes}
\ncom{\al}{\alpha}
\ncom{\cHom}{{\mathcal Hom}}
\ncom{\A}{{\mathbb A}}
\ncom{\comx}{{\mathbb C}}
\ncom{\E}{{\mathbb E}}
\ncom{\F}{{\mathbb F}}
\ncom{\G}{{\mathbb G}}
\ncom{\K}{{\mathbb K}}
\ncom{\Le}{{\mathbb L}}
\ncom{\N}{{\mathbb N}}
\ncom{\p}{{\mathbb P}}
\ncom{\Q}{{\mathbb Q}}
\ncom{\R}{{\mathbb R}}
\ncom{\Z}{{\mathbb Z}}
\ncom{\f}{\dfrac}
\ncom{\wtil}{\widetilde}
\ncom{\ci}{{\mathpzc i}}
\ncom{\cA}{{\mathcal A}}
\ncom{\cC}{{\mathcal C}}
\ncom{\cE}{{\mathcal E}}
\ncom{\cF}{{\mathcal F}}
\ncom{\cG}{{\mathcal G}}
\ncom{\cH}{{\mathcal H}}
\ncom{\cI}{{\mathcal I}}
\ncom{\cJ}{{\mathcal J}}
\ncom{\cL}{{\mathcal L}}
\ncom{\cM}{{\mathcal M}}
\ncom{\cN}{{\mathcal N}}
\ncom{\cO}{{\mathcal O}}
\ncom{\cP}{{\mathcal P}}
\ncom{\cQ}{{\mathcal Q}}
\ncom{\cR}{{\mathcal R}}
\ncom{\cS}{{\mathcal S}}
\ncom{\cT}{{\mathcal T}}
\ncom{\cU}{{\mathcal U}}
\ncom{\cV}{{\mathcal V}}
\ncom{\cW}{{\mathcal W}}
\ncom{\cX}{{\mathcal X}}
\ncom{\cY}{{\mathcal Y}}
\ncom{\cZ}{{\mathcal Z}}
\ncom{\cSU}{{\mathcal S \mathcal U}}
\ncom{\eop}{{\hfill $\Box$}}
\ncom{\isom}{\cong}
\ncom{\todo}{{\textbf{TODO}}}
\ncom{\fka}{{\mathfrak{a}}}
\newtheorem{theorem}{Theorem}[section]
\newtheorem{lemma}[theorem]{Lemma}
\newtheorem{corollary}[theorem]{Corollary}
\newtheorem*{theorem*}{Theorem}
\newtheorem{question}[theorem]{Question}
\newtheorem{answer}[theorem]{Answer}
\begin{document}
\baselineskip=16pt

\title[]{A result on intersecting families with maximum transversal size}
\author{Amit Tripathi}
\address{Statistics and Mathematics unit, 
Indian Statistical Institute, Bangalore - 560 059, India}
\email{amittr@gmail.com}
\thanks{This work was supported by a postdoctoral fellowship from NBHM, Department of Atomic Energy.}
\subjclass{05D05, 05D15}
\keywords{Uniform intersecting families, transversal size}
\begin{abstract}
We construct an intersecting $k$-family of transversal size  $\lceil \frac{k+1}{2} \rceil$ and length $k+1$ and study some of its properties. We then use this family to prove that $q(4) = 9$. We also construct a $k$-family for $k = 2^m - 1$ of length $2k+1$ and transversal size at least $(2k+1)/3$. 
\end{abstract}
\maketitle

\section*{}

Let $k \in \Z^+$. A $k$-\textit{set} is a set with $k$ elements. A $k$-\textit{family} is a collection of $k$-sets. A $k$-family $\cF$ is \textit{intersecting} if $F \cap G \neq \emptyset$ for all $F, G \in \cF$. A set $C$ is called a \textit{covering set} of $\cF$ if $C \cap F \neq \emptyset$ for all $F \in \cF$. The transversal size $\tau$ of a $k$-family $\cF$ is the smallest possible integer $t$ such that there exists a $t$-set covering $\cF$. A \textit{transversal} is a covering set of transversal size.

Erd\"{o}s and Lov\'{a}sz  \cite{Er-Lov} defined $q(k)$ to be the smallest integer such that there exists a $k$-intersecting family of transversal size $k$ and size $q(k)$. They asked for an estimate on the size of $q(k)$ and in particular to "prove or disprove if $q(k) \sim o(k)$". For a long time this remained open and was one of the favorite problems of Erd\"{o}s, see \cite{Er}. It was settled in affirmative by Jeff Kahn in \cite{Jeff}. Unfortunately, the proof didn't give any estimate for the proportionality constant or an estimate on $k$ from where $q(k)$ becomes a linear function. 

It is easy to see that $q(2) = 3$. In \cite{Frankl}, it was proved (among other things) that $q(3) = 6$. In this note, we construct an intersecting $k$-family $\cM_k$ for all $k$ and study some of its properties. We then use it to prove: 
\begin{theorem*}[See theorem \ref{main_result} and the example given at the end]
q(4) = 9
\end{theorem*}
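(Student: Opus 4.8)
The plan is to prove the two inequalities $q(4)\le 9$ and $q(4)\ge 9$ separately; the first is the ``example at the end'' and the second is the substance of Theorem~\ref{main_result}.

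\emph{Upper bound.} For $q(4)\le 9$ it suffices to exhibit a single $4$-uniform intersecting family with exactly nine members and transversal size $4$. Guided by the structure of $\cM_4$ (which is $4$-uniform, intersecting, has five members and transversal size $3$), one writes down such a family $\cF_0$ by adjoining four carefully chosen $4$-sets so that no element lies in enough members to permit a $3$-element cover. Once $\cF_0$ is explicit, the verification is finite: $4$-uniformity and the pairwise intersection property are read off from the description, $\tau(\cF_0)\le 4$ is automatic because any single member is a covering set, and the only real content is $\tau(\cF_0)\ge 4$. For that I would note that a minimum cover must meet every member, so a hypothetical $3$-element cover is forced to reuse its three points across all nine members; using the known degrees of the elements of $\cF_0$ this leaves only a short list of candidate $3$-sets to rule out by inspection.

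\emph{Lower bound.} Here I argue by contradiction: assume $\cF$ is $4$-uniform, intersecting, $\tau(\cF)=4$, and $|\cF|\le 8$. First, the elementary pairing bound --- an intersecting family of $m$ members has a cover of size $\lceil m/2\rceil$, obtained by taking a common point from each of $\lfloor m/2\rfloor$ disjoint pairs --- already forces $|\cF|\ge 7$. The same bound shows that a $4$-uniform intersecting family with $\tau\ge 3$ has at least five members (and $\cM_4$ shows five is attained), so deleting from $\cF$ all members through a fixed element $y$ must leave at least five members: indeed the remaining family $\cF\setminus\{F:y\in F\}$ has transversal size $\ge 3$, since otherwise $\{y\}$ together with a $2$-element cover of the remainder would give a $3$-element cover of $\cF$. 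Hence every element of $\cF$ has degree at most $|\cF|-5$, i.e.\ at most $2$ if $|\cF|=7$ and at most $3$ if $|\cF|=8$.

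When $|\cF|=7$ this is already contradictory: every one of the $\binom{7}{2}=21$ pairs of members shares an element, that element has degree exactly $2$, distinct pairs cannot share such an element, so $\cF$ has at least $21$ elements of degree $2$, contributing at least $42$ to $\sum_{F\in\cF}|F|=28$ --- impossible. The remaining and genuinely hard case is $|\cF|=8$ with maximum element-degree $3$. Here I would combine a counting of how the degree-$2$ and degree-$3$ elements can jointly cover the $\binom{8}{2}=28$ pairs of members --- which pins the degree sequence down to a handful of possibilities --- with the rigidity supplied by the earlier analysis of $\cM_k$: whenever an element $y$ has degree $3$, the family $\cF$ with the three members through $y$ removed is a five-member $4$-uniform intersecting family of transversal size $3$, hence of a very restricted shape (a copy of $\cM_4$, or one of a small number of variants), which severely constrains how those three members can sit. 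In each surviving configuration one then exhibits three elements covering all eight members, contradicting $\tau(\cF)=4$. I expect this $|\cF|=8$ analysis to be the main obstacle: keeping the interaction of the degree-$3$ elements, the near-$\cM_4$ substructures, and the intersection constraints organised so that the case distinction stays finite and every branch genuinely closes is where all the work lies.
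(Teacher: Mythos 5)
Your preliminary steps are correct and in places cleaner than the paper's: the pairing bound giving $\tau\le\lceil m/2\rceil$ (hence $|\cF|\ge 7$), the observation that deleting the blocks through any vertex $y$ leaves a family of transversal size $\ge 3$ and hence at least five blocks, so every degree is at most $|\cF|-5$, and the injective pair-to-witness count that kills $|\cF|=7$ outright (the paper instead routes this through its dichotomy lemma). But the proof as written has two genuine gaps, and they are exactly the two places where the content of $q(4)=9$ lives. First, the upper bound: you never exhibit the nine-block family, you only assert that four ``carefully chosen'' blocks can be adjoined to $\cM_4$ so that no $3$-set covers. That existence claim is the whole of $q(4)\le 9$ and cannot be waved at; the paper writes down an explicit family on $11$ vertices (whose first five blocks form a copy of $\cM_4$) and the finite check of $\tau=4$ is then legitimate, but without the family there is nothing to check.

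Second, and more seriously, the case $|\cF|=8$ is left as a plan, with the key rigidity statement hedged (``a copy of $\cM_4$, or one of a small number of variants'') and the final contradiction postponed to an unspecified case analysis. This case does close, and along essentially the lines you sketch, but the steps have to be carried out: (i) a degree-$3$ vertex $x$ exists (your own $|\cF|=7$ counting adapts: if all degrees were $\le 2$, the $\binom{8}{2}=28$ pairs of blocks would need $28$ distinct degree-$2$ witnesses, giving degree sum $\ge 56>32$); (ii) $\cF_x$ has exactly five blocks and $\tau(\cF_x)=3$, and every vertex of $\cF_x$ has degree exactly $2$ --- a degree-$\ge 3$ vertex would cover three of the five blocks while the remaining two blocks intersect, forcing $\tau(\cF_x)\le 2$, and a degree-$1$ vertex would force a degree-$\ge 3$ vertex inside its own block --- so $\cF_x$ is exactly $\cM_4$, not a ``variant''; (iii) hence $\cF$ has $11$ vertices with degree sum $32$ and no vertex of degree $1$ or $\ge 4$, so ten vertices of degree $3$ and one of degree $2$; (iv) of the $\binom{11}{2}=55$ vertex pairs at most $6\cdot 8=48$ occur inside blocks, so at least $7$ pairs never co-occur, of which the degree-$2$ vertex accounts for at most $4$, so some non-co-occurring pair $\{a,b\}$ of degree-$3$ vertices covers six blocks, the remaining two blocks meet in some $c$, and $\{a,b,c\}$ covers $\cF$, contradicting $\tau(\cF)=4$. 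Until (i)--(iv) (or an equivalent argument) and the explicit nine-block example are supplied, the proposal is an outline of the paper's proof rather than a proof.
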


We further use $\cM_k$ to construct a uniform intersecting $k$-family of transversal size at least $(2k+1)/3$ and length $2k+1$ when $k = 2^m - 1, \,\, \forall\,m \in \N$.

\section{An intersecting family}

The set $\cup_{F \in \cF} F$ is called the \textit{set of vertices} of $\cF$. \textit{Degree of a vertex} is the number of $k$-sets of $\cF$ containing that vertex. \textit{Length or size} of a family $\cF$ is the number of blocks in it. Any $k$-set in the family is called a \textit{block} of the family.

\newpage 
Following lemma constructs the uniform intersecting family $\cM_k$ for every $k$:

\begin{lemma} \label{lemma_minimal} For any $k \in \Z^+$, there exists an intersecting family $\cM_k$ of transversal size $\lceil \frac{k+1}{2} \rceil$ and length $k+1$. Furthermore, the number of vertices in this family is equal to  $k(k+1)/2$.
\end{lemma}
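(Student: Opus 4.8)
The plan is to realize $\cM_k$ as the ``star hypergraph'' of the complete graph $K_{k+1}$. Fix a set $N$ of $k+1$ nodes and take the vertex set of $\cM_k$ to be the edge set $E(K_{k+1})$; for each node $i \in N$ let $F_i \subseteq E(K_{k+1})$ be the star at $i$, that is, the set of all edges incident to $i$. Then I would define $\cM_k := \{F_i : i \in N\}$, a family of $k+1$ blocks.

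First I would record the easy structural facts. Since $K_{k+1}$ is $k$-regular, each star $F_i$ has exactly $k$ elements, so $\cM_k$ is genuinely a $k$-family, and it has $k+1$ blocks, one per node, so its length is $k+1$. For $i \neq j$ the stars $F_i$ and $F_j$ meet precisely in the single edge $\{i,j\}$, so $\cM_k$ is intersecting. Every edge of $K_{k+1}$ lies in the star of each of its two endpoints, so the vertex set of $\cM_k$ is all of $E(K_{k+1})$, which has cardinality $\binom{k+1}{2} = k(k+1)/2$; this also shows every vertex of $\cM_k$ has degree exactly $2$, which is the feature forcing the $K_{k+1}$ picture.

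The substantive point is the transversal size. I would first note that if $C$ is any covering set of $\cM_k$ then $C \cap \big(\bigcup_i F_i\big)$ is a covering set of no larger size, so a minimum covering set may be taken to consist of vertices of $\cM_k$, i.e.\ of edges of $K_{k+1}$. Such a set $C$ covers $\cM_k$ iff $C \cap F_i \neq \emptyset$ for every $i$, i.e.\ iff every node of $K_{k+1}$ is an endpoint of some edge of $C$ --- that is, iff $C$ is an edge cover of $K_{k+1}$. Hence $\tau(\cM_k)$ equals the edge-cover number $\rho(K_{k+1})$. For the lower bound, each edge covers at most two of the $k+1$ nodes, so $\rho(K_{k+1}) \geq \lceil \frac{k+1}{2} \rceil$. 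For the upper bound I would exhibit an explicit edge cover of that size: pair up the nodes into $\lfloor \frac{k+1}{2}\rfloor$ disjoint edges, and if $k+1$ is odd adjoin one more edge through the leftover node, using $\lceil \frac{k+1}{2}\rceil$ edges in all. (Alternatively one may quote Gallai's identity $\nu(G)+\rho(G)=|V(G)|$ for graphs without isolated vertices together with $\nu(K_{k+1}) = \lfloor \frac{k+1}{2}\rfloor$.) Therefore $\tau(\cM_k) = \lceil \frac{k+1}{2}\rceil$.

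I do not anticipate a serious obstacle: once the $K_{k+1}$ construction is in hand, the block size, length, vertex count, and intersecting property are all immediate bookkeeping, and the only step requiring an actual argument is the transversal computation. There the work is the reduction identifying covering sets of $\cM_k$ with edge covers of $K_{k+1}$, after which the two-sided counting bound finishes it; the one point to state carefully is the ``without loss of generality'' reduction to covering sets made of vertices, but that is routine.
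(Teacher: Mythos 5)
Your proposal is correct, and it is worth noting that the object you build is the same family the paper constructs: the paper's greedy procedure (each new block takes one degree-one vertex from every earlier block plus fresh symbols) produces exactly a family of $k+1$ blocks in which any two blocks meet in a single vertex and every vertex has degree $2$, which is precisely the star hypergraph of $K_{k+1}$; indeed the paper's own uniqueness corollary says such a family is unique up to relabelling. The difference is in the route. The paper describes $\cM_k$ algorithmically, reads off the vertex count $k(k+1)/2$ from the number of new symbols introduced at each step, and proves only the lower bound on the transversal size (every vertex has degree $2$, so a covering $t$-set meets at most $2t$ blocks, forcing $2t \geq k+1$); the matching upper bound is left implicit. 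Your closed-form $K_{k+1}$ model makes the block size, length, intersecting property, vertex count and degrees immediate, and your identification of covering sets with edge covers of $K_{k+1}$ yields both bounds, including the explicit cover of size $\lceil \frac{k+1}{2}\rceil$ coming from a (near-)perfect matching --- so your write-up is actually slightly more complete on the transversal computation. What the paper's iterative description buys in exchange is that it matches how $\cM_k$ is later recognized inside other families (via the degree-$2$ and single-intersection conditions in the dichotomy lemma and the $q(4)$ argument), whereas your graph-theoretic model is the cleaner way to verify the lemma itself.
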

\begin{proof} Let the first $k-$set be $F_1 = \{1,2,\ldots k\}$. Assume we have chosen $F_m$. To construct $F_{m+1}$, we pick exactly one vertex from every $F_i, \,\,$ $i \leq m$ which has appeared only once so far in the family. This gives us $m$ vertices for $F_{m+1}$. We complete this set by choosing any $k-m$ formal symbols which have not appeared so far in the vertices. It is easy to see that this algorithm ends at $k+1$'th step, which is the length of the family.

By construction this is an intersecting $k$-family. The claim on number of vertices follows easily as in $m$'th step we are picking $k-m$ new symbols. 

Finally, all the vertices have degree 2. Suppose a $t$-set $\{x_1, \ldots x_t\}$ covers this family. Then we must have $2t \geq k+1$. Since $t$ must be an integer, we get the claim on transversal size.
\end{proof}

We will need the following uniqueness result.
\begin{corollary} \label{cor_uniqueness} If in the statement of lemma \ref{lemma_minimal}, in addition to the assumptions on length and the transversal size, we further assume that the degree of all vertices is 2, then the family constructed is unique upto bijection of vertex set.
\end{corollary}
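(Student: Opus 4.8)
The plan is to show that the three hypotheses force $\cF=\{F_1,\dots,F_{k+1}\}$ to be (a relabelling of) the canonical ``star'' family on the complete graph $K_{k+1}$: take the vertex set to be the $\binom{k+1}{2}$ unordered pairs $\{i,j\}$ with $1\le i<j\le k+1$, and let $F_i$ consist of all pairs containing $i$. Since this description depends on nothing but $k$, any two families meeting the hypotheses will be related by a bijection of their vertex sets, which is exactly the asserted uniqueness.

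First I would count incidences: there are $k(k+1)$ vertex--block incidences, and since every vertex has degree exactly $2$, there are exactly $k(k+1)/2=\binom{k+1}{2}$ vertices. The structural heart of the argument is then: because each vertex lies in precisely two blocks, it determines a unique unordered pair of block indices, so the sets $F_i\cap F_j$ for $i<j$ are pairwise disjoint and their union is the entire vertex set. Hence $\sum_{i<j}|F_i\cap F_j|=\binom{k+1}{2}$; as $\cF$ is intersecting, each of these $\binom{k+1}{2}$ summands is at least $1$, so in fact $|F_i\cap F_j|=1$ for every pair $i\ne j$ (which in particular forces the $k+1$ blocks to be distinct when $k\ge 2$). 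Now define $\varphi$ by sending the unique element of $F_i\cap F_j$ to the pair $\{i,j\}$; this is a bijection from the vertex set onto the edge set of $K_{k+1}$, and since $F_i$ meets each of the other $k$ blocks in exactly one point and has size $k$, we get $\varphi(F_i)=\{\{i,j\}:j\ne i\}$. Thus $\varphi$ identifies $\cF$ with the canonical star family, and composing the identifications of two such families yields the desired bijection of vertex sets. One should also note that the family $\cM_k$ of Lemma \ref{lemma_minimal} has all vertices of degree $2$, so it is precisely this canonical family up to relabelling.

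Since the proof is essentially a single double count, there is no serious obstacle; the only point needing care is the claim that ``degree exactly $2$ everywhere'' makes the pairwise intersections $F_i\cap F_j$ partition the vertex set --- one must check both that no vertex is omitted (ruled out by the absence of vertices of degree $\le 1$) and that none is counted twice (ruled out by the absence of vertices of degree $\ge 3$). After that, the intersecting hypothesis pins each pairwise intersection down to a single vertex and the combinatorial rigidity does the rest. It is worth remarking that the hypothesis on transversal size is, strictly speaking, not used in this argument.
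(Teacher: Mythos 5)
Your proposal is correct and complete. The paper itself gives no argument here (its proof is the single assertion that uniqueness ``is easy to see''), so what you have written is genuinely more than the paper provides: the double count $\sum_{i<j}|F_i\cap F_j|=\binom{k+1}{2}$, forced by the degree-$2$ hypothesis making the pairwise intersections partition the vertex set, pins down $|F_i\cap F_j|=1$ for all $i\neq j$, and the identification of each vertex with the pair of blocks containing it exhibits any such family as the stars of $K_{k+1}$, which gives uniqueness up to a vertex bijection. The implicit argument the author likely had in mind is to rerun the construction of Lemma \ref{lemma_minimal} and observe that the degree-$2$ condition forces every choice up to relabelling (this is essentially how the next corollary in the paper is argued), whereas your route is a static counting argument that avoids any induction on the construction; it also makes transparent your final observation, which is accurate, that the hypothesis on transversal size is redundant --- given $k$-uniformity, length $k+1$, the intersecting property and all degrees equal to $2$, the transversal size is automatically $\lceil(k+1)/2\rceil$. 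The only edge case worth a word is $k=1$ (where blocks coincide), which you have already noted by restricting the distinctness remark to $k\ge 2$; it does not affect the uniqueness claim.
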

\begin{proof}
It is easy to see that such a family is unique upto bijection of vertex set.
\end{proof}

\begin{corollary} Given any intersecting $k$-family $\cF$ with all vertices having degree 2 and any two $k$-sets intersect in exactly one vertex, then $\cF = \cM_k$. 
\end{corollary}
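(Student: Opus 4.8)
The plan is to recover, from the stated hypotheses, the two numerical invariants that appear in Corollary \ref{cor_uniqueness}, namely that $\cF$ has length $k+1$ and transversal size $\lceil\frac{k+1}{2}\rceil$; the degree-$2$ condition is already part of the hypotheses, so Corollary \ref{cor_uniqueness} will then finish the proof.

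First I would determine the length. Write $\cF=\{F_1,\dots,F_n\}$ and fix $F_1$. For each $j\ne 1$ the set $F_1\cap F_j$ consists of a single vertex $v_j$. The assignment $j\mapsto v_j$ is injective, since $v_j=v_{j'}$ for $j\ne j'$ would force that vertex into three blocks, against the degree hypothesis; and it is surjective, since each $v\in F_1$ lies, besides $F_1$, in exactly one further block $F_j$, and then $F_1\cap F_j=\{v\}$. Hence $n-1=|F_1|=k$, so $\cF$ has length $k+1$.

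Next the transversal size. Since every vertex has degree $2$, a covering $t$-set meets at most $2t$ blocks, so $2t\ge k+1$ and the transversal size is at least $\lceil\frac{k+1}{2}\rceil$. For the opposite inequality I would regard the blocks $F_1,\dots,F_{k+1}$ as the vertices of the complete graph $K_{k+1}$: as $\cF$ is intersecting, each edge $\{F_i,F_j\}$ is witnessed by some vertex of $\cF$ lying in $F_i\cap F_j$, and the degree-$2$ hypothesis makes distinct edges correspond to distinct vertices. A minimum edge cover of $K_{k+1}$ has size exactly $\lceil\frac{k+1}{2}\rceil$ --- a perfect matching when $k$ is odd, a near-perfect matching plus one extra edge when $k$ is even --- and the corresponding vertices form a covering set of $\cF$ of that size. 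Thus the transversal size equals $\lceil\frac{k+1}{2}\rceil$, and Corollary \ref{cor_uniqueness} gives $\cF=\cM_k$ up to a bijection of the vertex set.

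I expect the only step that is not purely formal to be this upper bound on the transversal size: one has to notice that, under these hypotheses, covering sets of $\cF$ are essentially edge covers of $K_{k+1}$, after which the bound is just the edge-cover number of a complete graph. As a byproduct the same correspondence identifies the vertex set of $\cF$ with the $\binom{k+1}{2}=k(k+1)/2$ edges of $K_{k+1}$, so one could alternatively build the isomorphism $\cF\cong\cM_k$ by hand instead of invoking Corollary \ref{cor_uniqueness}.
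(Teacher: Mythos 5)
Your argument is correct, but it follows a different route from the paper. The paper's own proof is a direct (and very terse) reconstruction: it argues that the condition ``any two blocks meet in exactly one vertex'' leaves only one choice for the block added at each stage of the greedy construction of Lemma \ref{lemma_minimal}, and that the degree-$2$ condition then forces $\cF=\cM_k$; no invariants are computed and Corollary \ref{cor_uniqueness} is not invoked. You instead derive the numerical hypotheses of Corollary \ref{cor_uniqueness} from the intersection and degree conditions: the bijection $j\mapsto v_j$ between the blocks other than $F_1$ and the vertices of $F_1$ gives length $k+1$, and the identification of vertices of $\cF$ with edges of $K_{k+1}$ (with covering sets corresponding to edge covers) pins the transversal size at exactly $\lceil\frac{k+1}{2}\rceil$, after which Corollary \ref{cor_uniqueness} applies. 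Both steps check out, including the injectivity/surjectivity of $j\mapsto v_j$ and the edge-cover number $\lceil\frac{k+1}{2}\rceil$ of $K_{k+1}$. What your route buys is explicitness: it makes visible exactly which invariants the hypotheses force, recovers the vertex count $k(k+1)/2=\binom{k+1}{2}$ as a byproduct, and (as you note) the vertex--edge correspondence could be upgraded to an explicit isomorphism with $\cM_k$, which would bypass Corollary \ref{cor_uniqueness} entirely --- a worthwhile option given that the paper's proof of that corollary is itself only an ``easy to see'' assertion. The cost is length: the paper's argument is two sentences, while yours needs the $K_{k+1}$ observation, but there is no circularity in using Corollary \ref{cor_uniqueness} since it precedes this statement.
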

\begin{proof} For any intersecting family, if any two $k$-sets intersect in exactly one vertex then there is just one choice for a $k$-set at $l$'th stage (up to bijection of set of vertices). If we further force the condition that all vertices have degree 2, then such a $\cF$ must be $\cM_k$. 
\end{proof}

\begin{lemma} \label{lemma_dichotomy} Let $k > 1$. Suppose $\cF$ be an intersecting $k$-family of transversal size $k$ and minimal length. Then either $\cF$ has a vertex of degree 3 or $\cF = \cM_2$.
\end{lemma}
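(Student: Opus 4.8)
The plan is to establish the contrapositive: assuming $\cF$ has no vertex of degree $3$ or more, so that every vertex has degree exactly $1$ or $2$, I would show that $k=2$ and $\cF=\cM_2$. Write $\ell$ for the length of $\cF$, and for $i=1,2$ let $d_i$ be the number of vertices of degree $i$. Recall that $\tau(\cF)\le k$ always holds, since any single block is a covering set; the hypothesis $\tau(\cF)=k$, together with $k>1$, is what drives the argument (and it already forces $\ell\ge 2$). Note that minimality of the length will not actually be used, only that the transversal size equals $k$.

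The first step is an upper bound on $\ell$. Since $\cF$ is intersecting, any two distinct blocks $B,B'$ have a common vertex, and such a vertex has degree $\ge 2$, hence degree $2$. Picking one common vertex for each of the $\binom{\ell}{2}$ unordered pairs of blocks defines an injection into the set of degree-$2$ vertices --- a degree-$2$ vertex lies in exactly two blocks, so it is hit by exactly one pair --- whence $d_2\ge\binom{\ell}{2}$. Counting incidences between blocks and vertices, $\ell k=d_1+2d_2\ge 2\binom{\ell}{2}=\ell(\ell-1)$, so $\ell\le k+1$.

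The second step is a lower bound on $\ell$, using $\tau(\cF)=k$. Take a maximum matching on the set of blocks; for each matched pair choose a common (necessarily degree-$2$) vertex, and if $\ell$ is odd also choose one vertex of the single unmatched block. The resulting set has $\lceil\ell/2\rceil$ vertices --- distinct ones, since a degree-$2$ vertex determines its pair of blocks --- and it meets every block, so $k=\tau(\cF)\le\lceil\ell/2\rceil$, i.e. $\ell\ge 2k-1$.

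Combining the two bounds gives $2k-1\le\ell\le k+1$, hence $k\le 2$; as $k>1$ we get $k=2$, and then $\ell=3$. Plugging $\ell=3$ into $d_2\ge\binom{\ell}{2}=3$ and $d_1+2d_2=\ell k=6$ forces $d_1=0$ and $d_2=3$: three blocks, all of size $2$, all three vertices of degree $2$, so any two blocks meet in exactly one vertex. By Corollary \ref{cor_uniqueness} (length $k+1=3$, transversal size $\lceil(k+1)/2\rceil=2$, all degrees $2$) this family is $\cM_2$. The step I expect to require the most care is the covering-set construction of the second step: one must be sure the chosen vertices genuinely belong to $\cF$ and have degree $2$, which is precisely what the wealth of degree-$2$ vertices produced in the first step guarantees.
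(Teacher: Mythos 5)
Your proof is correct, but it takes a genuinely different route from the paper's. The paper argues structurally: using $\tau(\cF)=k$ it first rules out degree-$1$ vertices (otherwise the other $k-1$ vertices of that block would cover $\cF$), then rules out two blocks meeting in more than one vertex (again producing a $(k-1)$-cover), concludes via the uniqueness corollary that $\cF=\cM_k$, and finally uses $\tau(\cM_k)=\lceil\frac{k+1}{2}\rceil=k$ to force $k=2$. You instead squeeze the length: the injection of block-pairs into degree-$2$ vertices gives $d_2\ge\binom{\ell}{2}$ and hence $\ell\le k+1$, while the pairing cover gives $\tau\le\lceil\ell/2\rceil$ and hence $\ell\ge 2k-1$, so $k=2$, $\ell=3$, and the family is forced to be the triangle $\cM_2$ by a trivial count. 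Your second bound is in fact a general statement about intersecting families (it needs no degree hypothesis, and the distinctness of the chosen vertices is not even required for an upper bound on $\tau$), which the paper never isolates; your route also bypasses the degree-$1$ and intersection-size eliminations and the uniqueness corollary for general $k$ (invoking it only in the trivial case $k=2$), making it shorter and more self-contained. What the paper's structural argument buys in exchange is the explicit identification $\cF=\cM_k$ under the max-degree-$2$ hypothesis, which is the mechanism reused in the proof of Theorem \ref{main_result}; your observation that minimality of the length is never used applies equally to the paper's proof.
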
 
\begin{proof}
Suppose there doesn't exist any vertex of degree 3. Then all vertex must have degree 2, for if there exists a vertex of degree 1, then the remaining $k-1$ vertices of the corresponding $k$-set cover $\cF$ which contradicts the assumption on trasnversal size.

Now suppose there exists two $k$-sets which intersect in more than one vertex, then the remaining $k-2$ vertices along with one of these 2 vertex is covering the whole family. So any two $k$-sets intersect in precisely one vertex. In particular, $\cF = \cM_k$. Transversal size considerations gives that $\cF = \cM_2$.
\end{proof}

\section{Some applications}

\begin{theorem} \label{main_result} q(4) = 9
\end{theorem}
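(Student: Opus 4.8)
The upper bound $q(4)\le 9$ is witnessed by the explicit intersecting $4$-family on $9$ blocks given at the end of the paper, so the work is the reverse inequality $q(4)\ge 9$, which I would prove by contradiction. Suppose $\cF$ is an intersecting $4$-family with $\tau(\cF)=4$ of smallest possible length $n$, and suppose $n\le 8$. One preliminary observation: every intersecting family $\cH$ satisfies $\tau(\cH)\le\lceil |\cH|/2\rceil$, since one may repeatedly pick a vertex lying in two surviving blocks (they meet) and delete those two blocks, picking one extra vertex at the end if a single block is left. By Lemma \ref{lemma_dichotomy} (using $4>1$ and $\cF\neq\cM_2$, since $\cF$ is $4$-uniform) there is a vertex $x$ of degree $d\ge 3$. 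Put $\cG=\{F\in\cF:x\notin F\}$. If some $2$-set covered $\cG$, adjoining $x$ would give a $3$-set covering $\cF$, against $\tau(\cF)=4$; hence $\tau(\cG)\ge 3$, and the preliminary bound forces $|\cG|\ge 5$. Then $n=|\cG|+d\ge 8$, so in fact $n=8$, $d=3$, $|\cG|=5$, and $\tau(\cG)=3$.

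Next I would pin down $\cG$ completely. Since $\tau(\cG)=3$, no vertex of $\cG$ has degree $3$: if $y$ had degree $3$, the at most two blocks of $\cG$ missing $y$ would meet, and a vertex in their intersection together with $y$ would be a $2$-transversal. So every vertex of $\cG$ has degree $\le 2$. Moreover $\cG$ has no vertex of degree $1$: if $z$ had degree $1$, lying in the block $B=\{z,p,q,r\}$, then each of the four blocks other than $B$ meets $B$ inside $\{p,q,r\}$, so $\sum_{F\neq B}|F\cap\{p,q,r\}|\ge 4$; but each of $p,q,r$ lies in at most one block besides $B$, so this sum is at most $3$, a contradiction. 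Hence every vertex of $\cG$ has degree exactly $2$, and as $\cG$ is an intersecting $4$-family of length $5$ with $\tau(\cG)=3$ and all degrees $2$, Corollary \ref{cor_uniqueness} gives $\cG\isom\cM_4$.

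It remains to derive a contradiction from $\cF=\cM_4\cup\{A_1,A_2,A_3\}$, where $A_i=\{x\}\cup T_i$ with $|T_i|=3$ and $x$ is not a vertex of $\cM_4$. Since $\cF$ is intersecting, each $T_i$ meets every block of $\cM_4$, i.e.\ each $T_i$ is a $3$-transversal of $\cM_4$, while the conditions $A_i\cap A_j\ni x$ are automatic. Unravelling $\tau(\cF)=4$: a $3$-set through $x$ would have to contain a $2$-transversal of $\cM_4$, which is impossible, so $\tau(\cF)=4$ is equivalent to the assertion that no $3$-transversal of $\cM_4$ meets all three of $T_1,T_2,T_3$; in particular each $T_i$, being itself a $3$-transversal, must be disjoint from one of the other two, so the triple is pairwise disjoint or has a single intersecting pair. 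I would finish by checking, from the concrete structure of $\cM_4$ and its $3$-transversals (reduced by the evident symmetry), that for every such triple some $3$-transversal of $\cM_4$ does meet all three of $T_1,T_2,T_3$; this forces $\tau(\cF)\le 3$, contradicting $\tau(\cF)=4$. Hence $q(4)\ge 9$, and with the example $q(4)=9$.

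The heart of the argument, and the step I expect to be fiddly, is this last finite verification about $3$-transversals of $\cM_4$: everything before it is a short chain of counting inequalities whose sole purpose is to collapse the problem onto the single rigid configuration built on $\cM_4$ (this is exactly where $\cM_4$ and the uniqueness corollary of Lemma \ref{lemma_minimal} do their work). Carrying out the final check with minimal casework, rather than a brute enumeration over all triples of $3$-transversals, is where care is needed.
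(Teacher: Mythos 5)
Your reduction is sound and follows the paper's own route almost exactly up to the decisive point: minimal counterexample, Lemma \ref{lemma_dichotomy} to produce a vertex $x$ of degree $\geq 3$, the bound $\tau(\cH)\leq\lceil|\cH|/2\rceil$ to force degree exactly $3$, $|\cF_x|=5$ and $\tau(\cF_x)=3$, the degree analysis plus Corollary \ref{cor_uniqueness} to get $\cF_x\isom\cM_4$, and the correct observation that $\tau(\cF)=4$ is equivalent to saying that no $3$-transversal of $\cM_4$ meets all of $T_1,T_2,T_3$. (One glossed point: you should note that each $T_i$ lies inside the vertex set of $\cM_4$, since a vertex outside it covers none of the five blocks while each vertex of $\cM_4$ covers only two of them.) The genuine gap is that the proof stops exactly where the real work begins: the assertion ``for every such triple some $3$-transversal of $\cM_4$ meets all three of $T_1,T_2,T_3$'' is never verified --- no enumeration of the $3$-transversals of $\cM_4$, no symmetry reduction, no casework is actually given. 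Since you yourself identify this finite check as the heart of the argument, what you have is a correct reduction of $q(4)\geq 9$ to an unproved (though true and finitely checkable) claim, not a proof.

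For comparison, the paper closes precisely this gap with a short counting argument rather than enumeration. Since a minimal counterexample can have no vertex of degree $1$ or degree $\geq 4$ (the latter by the same argument you used for $x$; note this also kills your ``single intersecting pair'' case outright, since a vertex in two of the $T_i$ would have degree $4$), the vertex set of $\cF$ has $11$ elements and the degree sum $8\cdot 4=32$ forces ten vertices of degree $3$ and one of degree $2$. Of the $\binom{11}{2}=55$ pairs of vertices at most $\binom{4}{2}\cdot 8=48$ occur inside blocks, so at least $7$ pairs occur in no block; exactly $4$ of these involve the degree-$2$ vertex (its two blocks meet only in it), so some non-co-occurring pair $\{a,b\}$ has both vertices of degree $3$. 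Then $a,b$ together cover six distinct blocks, the remaining two blocks meet in some vertex $c$, and $\{a,b,c\}$ covers $\cF$, the desired contradiction. Adopting this count, or actually executing your finite check (for instance via the model in which vertices of $\cM_4$ are the edges of $K_5$ and $3$-transversals are $3$-edge vertex covers, restricted to pairwise disjoint triples $T_1,T_2,T_3$), is what is needed to complete your argument.
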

\begin{proof}
Suppose $q(4) \leq 8$. Consider any intersecting 4-family $\cF$ of transversal size $4$ and minimal length. By the lemma \ref{lemma_dichotomy} it has at least one vertex of degree $ \geq 3$. Suppose $x$ has degree $> 3$. Define
$$\cF_x := \{B \in \cF| \,\, x \notin B\}$$ 

Then $\cF_x$ has length $\leq 4$ and transversal size = 3. This is not possible as it is easy to verify that any intersecting 4-family of length $\leq 4$ has transversal size atmost 2. Therefore degree of $x = 3$ and $\cF_x$ has length at least 5 and transversal size = 3. 

Suppose the length of $\cF_x$ is 5. We claim that then $\cF_x = \cM_4$ (upto bijection). To see this first note that if there exists any vertex of degree $\geq 3$ in $\cF_x$, then its transversal size will be $\leq 2$. On the other hand, if any vertex appears only once in the family, then in the corresponding $k$-set there must be a vertex with degree 3 or more. Thus every vertex of $\cF_x$ has degree 2. By corollary \ref{cor_uniqueness}, $\cF_x = \cM_4$ as claimed.

The above discussion implies that $\cF$ is given by adjoining three sets (all containing the common vertex $x$) to the family $\cF_x$. Since $\cF_x = \cM_4$ contains exactly 10 vertices each with degree 2, therefore total number of vertices in $\cF$ is 11. A simple double counting argument then gives that there are 10 vertices with degree 3 and only one with degree 2 (we use the fact that sum of degrees of all vertices is $8 \times 4 = 32$ and there are no vertices of degree 1 or $\geq 4$).

We will now show that there exists a 3-set which covers $\cF$. Among these 11 vertices, total pairs of vertices which is possible is $\binom{11}{2}$, out of which at most $\binom{4}{2} \cdot 8$ pairs occur in the family. So at least 7 pairs don't occur in the family. The unique vertex with degree 2 can contribute only $10 - (3+3) = 4$ pairs to these 7 pairs. So there exists at least 3 pairs of vertices, where both vertices have degree 3, which don't occur together in the family. Pick one of them (say) $\{a,b\}$. Then $\{a,b\}$ cover a length 6 subfamily of $\cF$. Remaining 2 members of $\cF$ must intersect in some vertex (say) $c$. Then $\{a,b,c\}$ cover $\cF$.

This contradicts the assumption that $\cF_x$ has length 5 and thus proves that $q(4) > 8$. To complete the proof, we have presented a family of length 9 in the final section.
\end{proof}

As another application, we give a different proof of the following result from \cite{Frankl},
\begin{corollary} $q(3) = 6$.
\end{corollary}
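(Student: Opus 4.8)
The plan is to mirror the proof of Theorem \ref{main_result}: derive $q(3)\ge 6$ from a minimal-counterexample argument resting on Lemma \ref{lemma_dichotomy}, and match it with an explicit intersecting $3$-family of transversal size $3$ and length $6$.

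For the lower bound, suppose $q(3)\le 5$ and let $\cF$ be an intersecting $3$-family of transversal size $3$ and minimal length, so $\cF$ has at most $5$ blocks. Since $\cF$ is $3$-uniform it is not the $2$-uniform family $\cM_2$, so Lemma \ref{lemma_dichotomy} produces a vertex $x$ of degree at least $3$. Consider $\cF_x=\{B\in\cF : x\notin B\}$, exactly as in Theorem \ref{main_result}: it has at most $5-3=2$ blocks (fewer if $\deg x>3$) and is still intersecting, hence is covered by a single vertex $y$ — two intersecting $3$-sets share a point, a one-block family is covered by any of its points, and if $\cF_x$ is empty then already $\{x\}$ covers $\cF$. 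Then $\{x,y\}$ covers $\cF$, contradicting transversal size $3$. Hence $q(3)>5$.

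For the upper bound one exhibits an intersecting $3$-family of length $6$ and transversal size $3$, for instance the six lines remaining after one line is deleted from the Fano plane $PG(2,2)$: these pairwise meet, a short incidence count shows that no two of the seven points lie on more than five of them, while three suitably chosen points meet all six. Combining the bounds gives $q(3)=6$. The argument is essentially a verbatim specialization of the $k=4$ case, so there is no genuine obstacle; the only steps needing a line of care are the elementary fact that at most two intersecting blocks are always covered by a single vertex, the transversal-size check for the explicit family, and the observation that the reduction through $\cF_x$ remains valid when $\deg x>3$ since $\cF_x$ then only shrinks.
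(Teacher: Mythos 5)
Your proof is correct and follows essentially the same route as the paper: invoke Lemma \ref{lemma_dichotomy} on a minimal counterexample to get a high-degree vertex $x$, cover the at most two blocks of $\cF_x$ by a single common vertex $y$ so that $\{x,y\}$ contradicts transversal size $3$, and then exhibit a length-$6$ example. You are merely more explicit than the paper in handling $\deg x>3$ and $|\cF_x|\le 1$, and in supplying the Fano-plane-minus-a-line construction where the paper only appeals to well-known examples.
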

\begin{proof}
For if suppose there exists an intersecting 3-family $\cF$ of transversal size 3 of length 5, then by the lemma \ref{lemma_dichotomy} above it has at least one vertex $x$ of degree 3. We pick any vertex $y$ common to the 2 members of the family $\cF_x$ and then $\{x,y \}$ cover the family $\cF$. Since examples of intersecting 3-families with transversal size 3 are well known, this finishes the proof.
\end{proof}

We now prove a special property of the family $\cM_k$.
\begin{lemma} \label{lemma_disjoint} If $k$ is odd, then there exists $k$ disjoint transversals of the family $\cM_k$. 
\end{lemma}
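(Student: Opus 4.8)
The plan is to recognize $\cM_k$ as a hypergraph built from the complete graph $K_{k+1}$, and then to invoke the classical decomposition of a complete graph on an even number of vertices into perfect matchings. First I would set up the dictionary. By Lemma \ref{lemma_minimal} every vertex of $\cM_k$ has degree $2$, and any two blocks $F_i,F_j$ of $\cM_k$ meet in exactly one vertex (which is immediate from how $F_{m+1}$ is built: it takes one vertex from each earlier block, a vertex just taken from $F_l$ still has degree $1$ and so cannot also lie in some $F_i$ with $i\ne l$, and the remaining symbols are new). Hence the rule ``send a vertex $v$ of $\cM_k$ to the pair $\{i,j\}$ of indices of the two blocks containing it'' is a well-defined map to the edge set of $K_{k+1}$ on vertex set $\{1,\dots,k+1\}$; it is injective because $|F_i\cap F_j|=1$, and since both sides have $\binom{k+1}{2}=k(k+1)/2$ elements it is a bijection. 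Under this bijection the block $F_i$ becomes the star of edges at the vertex $i$, so a covering set of $\cM_k$ is exactly an edge cover of $K_{k+1}$, and a transversal (a covering set of the minimum size $\lceil (k+1)/2\rceil$) is exactly a minimum edge cover.

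Now assume $k$ is odd, so $n=(k+1)/2$ is an integer and $K_{k+1}=K_{2n}$. A perfect matching of $K_{2n}$ is an edge cover of size $n=\lceil(k+1)/2\rceil$, hence corresponds to a transversal of $\cM_k$, and edge-disjoint matchings correspond to disjoint transversals. So it suffices to produce $k=2n-1$ pairwise edge-disjoint perfect matchings of $K_{2n}$, i.e.\ a $1$-factorization. For this I would use the standard round-robin (circle) construction: relabel the blocks as $F_0,\dots,F_{k-1}$ together with $F_\infty$, and for $r=0,1,\dots,k-1$ take $M_r=\{\{\infty,r\}\}\cup\{\{r+i,\,r-i\}:1\le i\le (k-1)/2\}$, with indices read modulo $k$. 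Each $M_r$ is a perfect matching, and because $k$ is odd each edge lies in exactly one $M_r$ (the edge $\{\infty,s\}$ only in $M_s$, and a finite edge $\{a,b\}$ only in the $M_r$ with $2r\equiv a+b\pmod k$), so the $M_r$ partition $E(K_{k+1})$. Pulling the $M_r$ back through the bijection produces $k$ pairwise disjoint transversals of $\cM_k$ — in fact a partition of its vertex set — which is the assertion.

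The only genuine work is in the first step: carefully matching the recursive construction of Lemma \ref{lemma_minimal} with the ``stars of $K_{k+1}$'' description, in particular verifying that any two blocks share exactly one vertex; this is the main point to get right. Everything afterwards is routine graph theory, and checking that the circle construction is a $1$-factorization is classical. If one prefers to avoid the language of $K_{k+1}$ altogether, the same $k$ sets can be written down directly inside $\cM_k$ — read ``$\{r+i,r-i\}$'' as ``the unique common vertex of $F_{r+i}$ and $F_{r-i}$'' — and checked to be pairwise disjoint covering sets of size $(k+1)/2$ by the same counting; the content is identical.
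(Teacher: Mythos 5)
Your proposal is correct, and it follows the paper's reduction while replacing the paper's construction step with a different (and more classical) one. Like the paper, you observe that since every vertex of $\cM_k$ has degree $2$, a transversal of size $(k+1)/2$ corresponds to a perfect matching of $K_{k+1}$ on the block indices, and that disjoint transversals correspond to edge-disjoint matchings; you go further than the paper by explicitly verifying that any two blocks of $\cM_k$ meet in exactly one vertex, which is what makes the vertex--edge correspondence a genuine bijection and the back-and-forth translation airtight (the paper uses this implicitly). Where you diverge is in producing the $k$ pairwise edge-disjoint perfect matchings: the paper runs a greedy procedure (take $(1,2),(3,4),\dots$ first, then repeatedly complete a new partition by always covering the smallest unused index with a still-available pair), and its argument that this greedy process can always be completed at every intermediate step is not really justified; you instead invoke the round-robin (circle) $1$-factorization of $K_{2n}$, with $M_r=\{\{\infty,r\}\}\cup\{\{r+i,r-i\}:1\le i\le (k-1)/2\}$ modulo $k$, and check directly that each edge lies in exactly one $M_r$ because $2$ is invertible mod the odd number $k$. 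Your route buys an explicit, fully verifiable factorization (indeed a partition of the vertex set of $\cM_k$), at the mild cost of importing the classical construction; the paper's greedy approach is more self-contained in spirit but, as written, leaves a completeness gap that your argument avoids.
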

\begin{proof} We enumerate the blocks of $\cM_k$ as $B_1, \ldots B_{k+1}$. Any transversal has length $(k+1)/2$ and since the degree of each vertex is 2, it corresponds to a partition of integers $1, \ldots, k+1$ into $(k+1)/2$ pairs. Two transversals are disjoint if and only if the corresponding partitions do not have any common pair. So to prove the statement, we need to find $k$ partitions of the set $\{1,2\ldots k+1\}$ into pairs, with no two partition having common pair.

Consider the set $\cS$ of all possible $\binom{k+1}{2} $ pairs. Let $(1,2), (3,4) \ldots (k, k+1)$ be the first partition. Remove these pairs from set $\cS$. Suppose we have picked $l$'th partition where $l < k$ (such that no 2 partitions intersect), we remove all the pairs selected so far from the set $\cS$. Now pick the $l+1$'th partition as follows: pick any pair containing $1$ (there must be some still in $\cS$ as $1$ can form $k$ pairs and only $l$ have been chosen so far). Iteratively pick the next pair by picking a pair containing the smallest integer which has so far not appeared in this partition.

Each step takes away $\frac{k+1}{2}$ pairs from $\cS$ so the algorithm will terminate after $k$'th step as we have exhausted all possible pairs.
\end{proof}

In particular the transversal size of the family of transversals of $\cM_k$ is $k$.

\begin{theorem} Let $k = 2^m - 1$ for any integer $m \geq 2$. Then there exists a uniform intersecing regular $k$-family such that degree of each vertex in the family is 3. Furthermore, the length of this family is $2k+1$. In particular the transversal size of the family is at least $(2k+1)/3$.
\end{theorem}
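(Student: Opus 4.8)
The plan is to build the desired $k$-family from two copies of $\cM_k$ glued along their transversal structure, exploiting Lemma \ref{lemma_disjoint}. Recall that when $k = 2^m - 1$, $k$ is odd, so $\cM_k$ has $k(k+1)/2 = \binom{k+1}{2}$ vertices, all of degree $2$, with $k+1$ blocks, and by Lemma \ref{lemma_disjoint} it admits $k$ pairwise disjoint transversals, each of size $(k+1)/2$. These $k$ transversals use $k \cdot (k+1)/2 = \binom{k+1}{2}$ vertices in total, i.e.\ every vertex of $\cM_k$ lies in exactly one of them. So the $k$ disjoint transversals partition the vertex set of $\cM_k$ into $k$ classes of size $(k+1)/2$; equivalently, they give a proper "parallelism" on $\cM_k$.

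Now take a second disjoint copy $\cM_k'$ with its own $k$ disjoint transversals $T_1', \dots, T_k'$, and form the new family $\cN_k$ whose blocks are: all $k+1$ blocks of $\cM_k$, all $k+1$ blocks of $\cM_k'$, and then we need one more block to reach $2k+1$ — so instead take $\cN_k$ to have the $k+1$ blocks of $\cM_k$, the $k+1$ blocks of $\cM_k'$, minus one, and reorganize. More cleanly: the vertex set of $\cN_k$ is obtained by identifying, for each $i$, the transversal $T_i$ of $\cM_k$ with the transversal $T_i'$ of $\cM_k'$ via a bijection $T_i \to T_i'$. After this identification each vertex, which had degree $2$ in its own copy, now sits in two blocks of $\cM_k$ and two blocks of $\cM_k'$ — that would be degree $4$, too many. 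So instead I would identify only \emph{part} of the transversal data, or use a single shared transversal: the right construction is to glue $\cM_k$ to $\cM_k'$ along exactly one transversal $T$, so that the $(k+1)/2$ vertices of $T$ acquire degree $2+2 = 4$. To fix the degree, I instead add a \emph{new} set of blocks indexed by the disjoint transversals: since $\cM_k$ has $k$ disjoint transversals each of size $(k+1)/2$, and $k$ is odd, pairing up these transversals is awkward. The cleanest route is: let the new family consist of the $k+1$ blocks of $\cM_k$ together with $k$ further blocks, the $i$-th of which is the union $T_i \cup T_i'$ where $\{T_i'\}$ is a second disjoint transversal system on a disjoint vertex set of size $\binom{k+1}{2}$ arranged so that each $T_i'$ has size $(k+1)/2$ and these fit together as the blocks-minus-transversals of... — in fact the natural object is the \emph{dual}: the family of transversals of $\cM_k$ is itself an intersecting $k$-family (stated just before the theorem), with $\binom{k+1}{2}$ blocks and $k+1$ vertices of degree... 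This is where the bookkeeping must be pinned down: the key identity $2^m - 1 = k$ and $k+1 = 2^m$ should be used so that $(k+1)/2 = 2^{m-1}$ divides things evenly, and an inductive doubling $\cN_{2^{m}-1}$ from $\cN_{2^{m-1}-1}$ is likely the actual mechanism.

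Concretely, the steps I would carry out: (1) Set up $\cM_k$ with its $k$ disjoint transversals $T_1,\dots,T_k$ from Lemma \ref{lemma_disjoint}, noting these partition $V(\cM_k)$. (2) Define $\cN_k$ by taking two disjoint copies $\cM_k^{(0)}, \cM_k^{(1)}$ and identifying $T_i^{(0)}$ with $T_i^{(1)}$ \emph{as sets} for all $i$, but then \emph{replacing} the $2(k+1)$ original blocks by a carefully chosen collection of $2k+1$ blocks so that each vertex lands in exactly $3$ of them — the extra "$+1$" block being a transversal made from one vertex out of each class. (3) Verify the family is intersecting: any two blocks coming from the same copy intersect by construction; a block from copy $0$ and a block from copy $1$ both meet every $T_i$, hence meet in the identified vertices. (4) Count: two copies contribute, after the surgery, $2(k+1) - 1 = 2k+1$ blocks, and each vertex has degree $2 + 2 - 1 = 3$ after accounting for the shared block, giving regularity. (5) Deduce the transversal bound: if $D$ is a covering set, then summing block-incidences, $3|D| \geq $ (number of blocks) $= 2k+1$ since each vertex covers at most $3$ blocks, so $\tau \geq (2k+1)/3$.

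The main obstacle is step (2): making the gluing so that the degree is exactly $3$ everywhere and the block count is exactly $2k+1$, while preserving the intersecting property. Naively identifying transversals pushes degrees to $4$; naively taking a disjoint union gives degree $2$ and a disconnected (non-intersecting across copies) family. The resolution must use the special value $k = 2^m-1$ — presumably because $k+1 = 2^m$ lets one organize the $k$ disjoint transversals together with the $k+1$ blocks into a self-dual incidence structure (an affine or projective plane of order $2^{m-1}$, or a Hadamard-type design) where a third "parallel class" of size $2k+1$ can be introduced uniformly. Identifying that structure, and checking the intersecting condition across the two copies, is the crux; everything after it (the double count for regularity and the $\tau \geq (2k+1)/3$ bound) is routine.
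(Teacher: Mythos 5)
There is a genuine gap: you never actually produce the family, and you say so yourself ("the main obstacle is step (2)"). Both routes you try are correctly diagnosed as failing --- identifying the transversals of two equal-size copies of $\cM_k$ forces degree $4$, and a disjoint union is not intersecting --- but the proposal stops at that diagnosis plus a speculation about affine/projective planes of order $2^{m-1}$, which is not the mechanism. The missing idea is that the second ingredient is not another copy of $\cM_k$ but the family for the \emph{smaller} parameter $\frac{k-1}{2}=2^{m-1}-1$, obtained by induction (base case $m=2$: the projective plane of order $2$, a $3$-family of length $7$, degree $3$ everywhere). That smaller family has exactly $2\cdot\frac{k-1}{2}+1=k$ blocks $B_1,\dots,B_k$, each of size $\frac{k-1}{2}$, on a vertex set disjoint from $\cM_k$. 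Take the $k$ pairwise disjoint transversals $T_1,\dots,T_k$ of $\cM_k$ from Lemma \ref{lemma_disjoint} (each of size $\frac{k+1}{2}$, and together partitioning the vertex set of $\cM_k$, as you noted), and set $\cG=\{T_i\sqcup B_i \mid i=1,\dots,k\}$; each member has size $\frac{k+1}{2}+\frac{k-1}{2}=k$. The desired family is $\cF=\cM_k\sqcup\cG$: it has $(k+1)+k=2k+1$ blocks; every vertex of $\cM_k$ has degree $2+1=3$ (it lies in exactly one $T_i$), every vertex of the small family keeps its degree $3$; and it is intersecting because blocks of $\cM_k$ intersect each other, each $T_i$ meets every block of $\cM_k$, and the $B_i$ are pairwise intersecting. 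This is exactly where the hypothesis $k=2^m-1$ is used --- it makes the block count of the inductive family equal to the number of disjoint transversals and makes the sizes add up to $k$ --- not through any plane of order $2^{m-1}$.

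What you did get right matches the paper's supporting steps: Lemma \ref{lemma_disjoint} is indeed the key lemma, the observation that the $k$ disjoint transversals partition the $\binom{k+1}{2}$ vertices of $\cM_k$ is used in the degree count, and your step (5) (a covering set meets at most $3$ blocks per vertex, so $3|D|\geq 2k+1$) is the correct and routine derivation of the bound $\tau\geq(2k+1)/3$ once regularity of degree $3$ is established. But without the inductive construction the theorem is not proved.
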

\begin{proof}
The proof is by induction on $m$. For $m=2$, projective plane of order 2 satisfies all the properties. Suppose the statement is true for $m - 1$. To construct such a family for $m$, we consider $\cM_k$ where $k = 2^{m} - 1$. By lemma \ref{lemma_disjoint} there exists $k$ disjoint transversals of $\cM_k$, which we call $T_1, T_2 \ldots T_k$. By induction, we can pick a family satisfying the statement of the theorem for $\frac{k-1}{2} = 2^{m - 1} - 1$ of length $k$ such that the vetex set of this family is different from that of $\cM_k$. 

Enumerating the blocks of this family as $B_1, \ldots B_k$, we consider the family of $k$-sets $\cG= \{T_i \sqcup B_i \,|\, i = 1, \ldots k\}$. Consider now the family $\cF = \cM_k \sqcup \cG$. It is easy to verify that $\cF$ satisfies all the properties mentioned in the theorem. 
\end{proof}

\subsection{An Example}

The following is an example of an intersecting 4-family with 9 blocks and having transversal size 4,

$$\begin{array}{ccc}
(1,2,3,4),& (1,5,6,7),& (2,5,8,9),\\
(3,6,8,10),&  (4,7,9,10),&  (1,8,9,11),\\
(2,6,7,11),&  (3,4,5,11),&  (1,2,5,10),
\end{array}$$

Somewhat surprisingly, $\cM_4$ is embedded in this family as first 5 blocks. This completes the proof of the theorem \ref{main_result}.

We thank Kaushik Majumder for suggesting the problem and various inputs.
%

\begin{thebibliography}{AAAAA}

\bibitem{Er-Lov} Erd\"{o}s P. and Lov\'{a}sz L., \textit{Problems and results on 3-chromatic hypergraphs and some related questions}, Colloquia Mathematica Societatis J\'{a}nos Bolyai, 10. Infinite and Finite sets, Keszthely (Hungary), 1973

\bibitem{Er} Erd\"{o}s P., \textit{On the combinatorial problems which I would most like to see solved}, Combinatorica, volume 1, issue 1, 25-42, 1981

\bibitem{Frankl} Frankl P., Ota K. and Tokushige N., \textit{Covers in uniform intersecting families and a counterexample to a conjecture of Lov\'{a}sz}, Journal of Combinatorial Theory, Series A, 74, 33-42 (1996)

\bibitem{Jeff} Kahn J., \textit{{On a problem of Erd\"{o}s and Lov\'{a}sz. II: n(r) = O(r)}}, Journal of the American Mathematical Society, volume 7, number 1, 125-143, 1994
\end {thebibliography}

\end{document}